\begin{document}

\title*{Fractional Euler-Lagrange differential equations
via Caputo derivatives\thanks{This is a preprint of a paper
whose final and definite form will appear as Chapter~9 of the
book \emph{Fractional Dynamics and Control}, D.~Baleanu et al. (eds.),
Springer New York, 2012, DOI:10.1007/978-1-4614-0457-6\_9, in press.}}

\author{Ricardo Almeida \and Agnieszka B. Malinowska \and Delfim F. M. Torres}

\institute{Ricardo Almeida
\at Center for Research and Development in Mathematics and Applications\\
Department of Mathematics, University of Aveiro, 3810-193 Aveiro, Portugal\\
\email{ricardo.almeida@ua.pt}
\and
Agnieszka B. Malinowska
\at Faculty of Computer Science,
Bia{\l}ystok University of Technology,
15-351 Bia\l ystok, Poland\\
\email{abmalinowska@ua.pt}
\and
Delfim F. M. Torres
\at Center for Research and Development in Mathematics and Applications\\
Department of Mathematics, University of Aveiro, 3810-193 Aveiro, Portugal\\
\email{delfim@ua.pt}}

\maketitle


\abstract{We review some recent
results of the fractional variational calculus.
Necessary optimality conditions of Euler--Lagrange
type for functionals with a Lagrangian
containing left and right Caputo derivatives are given.
Several problems are considered: with fixed or free
boundary conditions, and in presence of
integral constraints that also depend on Caputo derivatives.\\[0.3cm]
{\bf MSC 2010\/}: 26A33; 34K37; 49K05; 49K21.
}


\section{Introduction}

Fractional calculus plays an important role in many different areas,
and has proven to be a truly multidisciplinary subject \cite{Kilbas,Podlubny}.
It is a mathematical field as old as the calculus itself.
In a letter dated 30th September 1695, Leibniz posed
the following question to L'Hopital: ``Can the meaning of derivative
be generalized to derivatives of non-integer order?''
Since then, several mathematicians had investigated Leibniz's challenge,
prominent among them were Liouville, Riemann, Weyl, and Letnikov. There
are many applications of fractional calculus,
\textrm{e.g.}, in viscoelasticity, electrochemistry, diffusion processes,
control theory, heat conduction, electricity, mechanics, chaos and fractals,
and signals and systems \cite{MR2605606,Magin:et:all}.

Several methods to solve fractional differential equations are available,
using Laplace and Fourier transforms, truncated Taylor series,
and numerical approximations. In \cite{Almeida4} a new direct method
to find exact solutions of fractional variational problems is proposed,
based on a simple but powerful idea introduced by Leitmann,
that does not involve solving (fractional) differential equations \cite{Tor:Leit}.
By an appropriate coordinate transformation, we rewrite the initial problem
to an equivalent simpler one; knowing the solution for the new equivalent
problem, and since there exists an one-to-one correspondence between
the minimizers (or maximizers) of the new problem with
the ones of the original, we determine the desired solution.
For a modern account on Leitmann's direct method
see \cite{MyID:183,MyID:187}.

The calculus of variations is a field of mathematics that deals
with extremizing functionals \cite{vanBrunt}.
The variational functionals are often formed as definite integrals
involving unknown functions and their derivatives. The fundamental problem
consists to find functions $y(x)$, $x \in [a,b]$,
that extremize a given functional when subject to
boundary conditions $y(a) = y_a$ and $y(b) = y_b$.
Since this can be a hard task, one wishes to study necessary
and sufficient optimality conditions. The simplest example is the following one:
what is the shape of the curve $y(x)$, $x \in [a,b]$, joining two fixed points
$y_a$ and $y_b$, that has the minimum possible length?
The answer is obviously the straight line joining $y_a$ and $y_b$.
One can obtain it solving the corresponding Euler--Lagrange
necessary optimality condition. If the boundary condition $y(b) = y_b$
is not fixed, \textrm{i.e.}, if we are only interested in the minimum length,
the answer is the horizontal straight line $y(x) = y_a$, $x \in [a,b]$ (free endpoint problem).
In this case we need to complement the Euler--Lagrange equation with an appropriate
natural boundary condition. For a general account on Euler--Lagrange equations and natural boundary
conditions, we refer the reader to \cite{MyID:141,MyID:169} and references therein.
Another important family of variational problems is the isoperimetric one \cite{MyID:136}.
The classical isoperimetric problem consists to find a continuously
differentiable function $y=y(x)$, $x \in [a,b]$,
satisfying given boundary conditions $y(a)=y_a$ and $y(b)=y_b$,
which minimizes (or maximizes) a functional
$$
I(y)=\int_a^b L(x,y(x), y'(x))\, dx
$$
subject to the constraint
$$
\int_a^b g(x,y(x), y'(x))\, dx = l.
$$
The most famous isoperimetric problem can be posed
as follows. Amongst all closed curves with a given length,
which one encloses the largest area? The answer, as we know,
is the circle. The general method to solve such problems
involves an Euler--Lagrange equation obtained via
the concept of Lagrange multiplier
(see, \textrm{e.g.}, \cite{MyID:131}).

The fractional calculus of variations is a recent field,
initiated in 1997, where classical variational problems
are considered but in presence of some fractional
derivative or fractional integral \cite{Riewe:1997}.
In the past few years an increasing of interest has been put on finding necessary
conditions of optimality for variational problems with
Lagrangians involving fractional derivatives \cite{Agrawal,Ata:et:al,Baleanu1,%
Baleanu2,El-Nabulsi:Torres,Frederico:Torres,MyID:089,MyID:149,MyID:163,MyID:181},
fractional derivatives and fractional integrals \cite{MyID:182,Almeida1,MyID:085},
classical and fractional derivatives \cite{MyID:207},
as well as fractional difference operators \cite{MyID:152,MyID:179}.
A good introduction to the subject is given in the monograph \cite{Klimek}.
Here we consider unconstrained and constrained
fractional variational problems via Caputo operators.


\section{Preliminaries and notations}
\label{sec:Prel}

There exist several definitions of fractional derivatives
and fractional integrals, \textrm{e.g.}, Riemann--Liouville, Caputo,
Riesz, Riesz--Caputo, Weyl, Grunwald--Letnikov, Hadamard, and Chen.
Here we review only some basic features of Caputo's fractional derivative.
For proofs and more on the subject, we refer the reader to \cite{Kilbas,Podlubny}.

Let $f:[a,b]\rightarrow\mathbb{R}$ be an integrable function,
$\alpha>0$, and $\Gamma$ be the Euler gamma function.
The left and right Riemann--Liouville fractional
integral operators of order $\alpha$ are defined by\footnote{Along
the work we use round brackets for the arguments of functions,
and square brackets for the arguments of operators. By definition,
an operator receives a function and returns another function.}
$$
{_aI_x^\alpha}[f] := x \mapsto \frac{1}{\Gamma(\alpha)}\int_a^x (x-t)^{\alpha-1}f(t)dt
$$
and
$$
{_xI_b^\alpha}[f] := x \mapsto \frac{1}{\Gamma(\alpha)}\int_x^b(t-x)^{\alpha-1} f(t)dt,
$$
respectively. The left and right Riemann--Liouville fractional derivative
operators of order $\alpha$ are, respectively, defined by
$$
{_aD_x^\alpha} :=\frac{d^n}{dx^n} \circ {_aI_x^{n-\alpha}}
$$
and
$$
{_xD_b^\alpha}:=(-1)^n \frac{d^n}{dx^n} \circ {_xI_b^{n-\alpha}},
$$
where $n=[\alpha]+1$. Interchanging the composition
of operators in the definition of Riemann--Liouville fractional derivatives,
we obtain the left and right Caputo fractional derivatives
of order $\alpha$:
$$
{_a^CD_x^\alpha} := {_aI_x^{n-\alpha}} \circ \frac{d^n}{dx^n}
$$
and
$$
{_x^CD_b^\alpha}:={_xI_b^{n-\alpha}} \circ (-1)^n \frac{d^n}{dx^n}.
$$

\begin{theorem}
Assume that $f$ is of class $C^n$ on $[a,b]$.
Then its left and right Caputo derivatives are
continuous on the closed interval $[a,b]$.
\end{theorem}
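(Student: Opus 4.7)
The plan is to prove the statement for the left Caputo derivative; the right case is symmetric (with the substitution $u = t - x$ replacing $u = x - t$). By definition, we need to show that
$$
g(x) := {_a^CD_x^\alpha}[f](x) = \frac{1}{\Gamma(n-\alpha)}\int_a^x (x-t)^{n-\alpha-1} f^{(n)}(t)\, dt
$$
is continuous on $[a,b]$. Since $n = [\alpha]+1$ we have $n-\alpha \in (0,1]$, so the kernel $(x-t)^{n-\alpha-1}$ has at worst an integrable singularity at $t=x$. Because $f \in C^n[a,b]$, the derivative $f^{(n)}$ is continuous on the compact set $[a,b]$; in particular it is bounded by some $M$ and uniformly continuous.

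First I would reduce the moving-endpoint issue to a fixed-interval integral by the substitution $u = x-t$, obtaining
$$
g(x) = \frac{1}{\Gamma(n-\alpha)}\int_0^{x-a} u^{n-\alpha-1}\, f^{(n)}(x-u)\, du.
$$
Given a sequence $x_k \to x$ in $[a,b]$, I would then extend each integrand to the fixed interval $[0,b-a]$ by multiplying by the characteristic function $\chi_{[0,x_k-a]}$ and consider the functions
$$
h_k(u) := u^{n-\alpha-1}\, f^{(n)}(x_k-u)\,\chi_{[0,x_k-a]}(u).
$$

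The key step is to invoke Lebesgue's dominated convergence theorem. The pointwise limit $h_k(u) \to u^{n-\alpha-1} f^{(n)}(x-u)\chi_{[0,x-a]}(u)$ holds for every $u \neq x-a$ by continuity of $f^{(n)}$ and of the characteristic functions off the single point $u = x-a$. A uniform integrable majorant is supplied by $|h_k(u)| \le M\, u^{n-\alpha-1}$, which is integrable on $[0,b-a]$ precisely because $n-\alpha > 0$. Therefore $g(x_k) \to g(x)$.

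The main obstacle is the singularity of the kernel when $\alpha$ is non-integer, combined with the variable upper limit of integration. The substitution $u=x-t$ turns the moving endpoint into a variable length of integration against a fixed singularity at $u=0$, and the domination $M\,u^{n-\alpha-1}$ then absorbs both difficulties at once. Continuity at the left endpoint $x=a$ is automatic since $g(a)=0$ and $|g(a+h)| \le \tfrac{M}{\Gamma(n-\alpha+1)} h^{n-\alpha}\to 0$ as $h\to 0^+$.
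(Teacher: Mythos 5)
Your argument is correct and complete. Note that the paper itself offers no proof of this statement: it is listed among the preliminaries, with the reader referred to Kilbas et al.\ and Podlubny for proofs, so there is no in-paper argument to compare against. Your route --- writing ${_a^CD_x^\alpha}[f](x)=\frac{1}{\Gamma(n-\alpha)}\int_0^{x-a}u^{n-\alpha-1}f^{(n)}(x-u)\,du$ after the substitution $u=x-t$, and then passing to the limit along $x_k\to x$ by dominated convergence with majorant $M\,u^{n-\alpha-1}$ (integrable since $n-\alpha=[\alpha]+1-\alpha>0$) --- is sound: the pointwise convergence fails at most at the single point $u=x-a$, the characteristic functions handle the variable length of integration, and sequential continuity suffices on $[a,b]$. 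The standard textbook proof reaches the same conclusion by a direct $\epsilon$--$\delta$ estimate, splitting $\int_a^x-\int_a^{x'}$ into a piece near the singularity (controlled by $M\,|x-x'|^{n-\alpha}/\Gamma(n-\alpha+1)$, exactly the bound you use at the endpoint) and a piece where the kernel is regular (controlled by uniform continuity of $f^{(n)}$); that version gives uniform continuity explicitly and avoids measure theory, but buys nothing essential over your argument. Your endpoint check at $x=a$ is in fact already subsumed by the general dominated-convergence step, so it could be omitted.
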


One of the most important results for the proof
of necessary optimality conditions,
is the integration by parts formula.
For Caputo derivatives the following relations hold.

\begin{theorem}
\label{thm:frac:IP:C}
Let $\alpha>0$, and $f, g:[a,b]\to\mathbb{R}$ be $C^n$ functions.
Then,
\begin{multline*}
\int_{a}^{b}g(x)\cdot {_a^C D_x^\alpha}[f](x)dx
=\int_a^b f(x)\cdot {_x D_b^\alpha}[g](x)dx\\
+\sum_{j=0}^{n-1}\left[{_xD_b^{\alpha+j-n}}[g](x)
\cdot {_xD_b^{n-1-j}}[f](x)\right]_a^b
\end{multline*}
and
\begin{multline*}
\int_{a}^{b}g(x)\cdot {_x^C D_b^\alpha}[f](x)dx
=\int_a^b f(x)\cdot {_a D_x^\alpha}[g](x)dx\\
+ \sum_{j=0}^{n-1} \left[(-1)^{n+j}{_aD_x^{\alpha+j-n}}[g](x)
\cdot {_aD_x^{n-1-j}}[f](x)\right]_a^b,
\end{multline*}
where ${_aD_x^{k}}={_aI_x^{-k}}$ and
${_xD_b^{k}}={_xI_b^{-k}}$ whenever $k<0$.
\end{theorem}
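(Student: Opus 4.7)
The plan is to reduce both formulas to repeated classical integration by parts, after first using Fubini's theorem to transfer a Riemann--Liouville integral from one factor to the other. I focus on the first identity; the second is entirely analogous.

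First I would use the definition to write
$$
{_a^C D_x^\alpha}[f](x) = {_aI_x^{n-\alpha}}[f^{(n)}](x),
$$
so that
$$
\int_a^b g(x)\cdot {_a^C D_x^\alpha}[f](x)\,dx
= \int_a^b g(x)\cdot {_aI_x^{n-\alpha}}[f^{(n)}](x)\,dx.
$$
Then, by Fubini's theorem applied to the double integral arising from the definition of ${_aI_x^{n-\alpha}}$, one obtains the adjoint relation
$$
\int_a^b g(x)\cdot {_aI_x^{n-\alpha}}[f^{(n)}](x)\,dx
= \int_a^b f^{(n)}(x)\cdot {_xI_b^{n-\alpha}}[g](x)\,dx,
$$
which transfers the fractional integral from $f^{(n)}$ to $g$. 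The hypothesis $f,g\in C^n$ guarantees that all integrals are absolutely convergent and that the swap of integration order is justified.

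Next I would perform $n$ successive classical integrations by parts on $\int_a^b f^{(n)}(x)\cdot {_xI_b^{n-\alpha}}[g](x)\,dx$, moving one derivative from $f$ onto ${_xI_b^{n-\alpha}}[g]$ at each step. The key computational observation is the differentiation rule
$$
\frac{d}{dx}\,{_xI_b^{\beta}}[g](x) = -\,{_xI_b^{\beta-1}}[g](x) \quad (\beta>1),
$$
so that after $k$ steps the factor acting on $f^{(n-k)}$ is $(-1)^k\,\frac{d^k}{dx^k}\,{_xI_b^{n-\alpha}}[g] = {_xI_b^{n-\alpha-k}}[g]$ (interpreted as ${_xD_b^{\alpha+k-n}}[g]$ according to the stated convention: an integral when $\alpha+k-n<0$ and a right Riemann--Liouville derivative otherwise). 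Each integration by parts produces a boundary term
$$
\Bigl[f^{(n-1-k)}(x)\,(-1)^{k}\tfrac{d^{k}}{dx^{k}}{_xI_b^{n-\alpha}}[g](x)\Bigr]_a^b,
$$
which, after rewriting $f^{(n-1-k)}$ as ${_xD_b^{n-1-k}}[f]$ up to a sign absorbed in the $(-1)^{n}$ appearing in the definition of ${_xD_b^{\alpha}}$, matches the $j=k$ summand in the stated formula. After the $n$-th step, the remaining integral is
$$
(-1)^n\int_a^b f(x)\cdot \tfrac{d^n}{dx^n}\,{_xI_b^{n-\alpha}}[g](x)\,dx
= \int_a^b f(x)\cdot {_xD_b^{\alpha}}[g](x)\,dx,
$$
which is precisely the bulk term on the right-hand side.

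The main obstacle is purely bookkeeping: keeping track of the cumulative signs from the $n$ differentiations of ${_xI_b^{n-\alpha}}[g]$, the $(-1)^n$ factor built into the definition of the right Riemann--Liouville derivative, and reindexing the boundary contributions so that the sum runs in the order $j=0,\ldots,n-1$ with the factors ${_xD_b^{\alpha+j-n}}[g]$ and ${_xD_b^{n-1-j}}[f]$. For the second identity one writes instead ${_x^C D_b^\alpha}[f] = (-1)^n {_xI_b^{n-\alpha}}[f^{(n)}]$, applies the same Fubini step and the same $n$-fold integration by parts; the extra $(-1)^n$ together with the sign $(-1)^k$ from each step combine to give the $(-1)^{n+j}$ appearing in the corresponding boundary sum.
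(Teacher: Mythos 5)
The paper does not actually prove this theorem: it is quoted as a known preliminary, with the reader referred to the monographs of Kilbas et al.\ and Podlubny, so there is no in-paper argument to compare against. Your outline is the standard derivation of the result, and its three ingredients --- the identity ${_a^C D_x^\alpha}[f]={_aI_x^{n-\alpha}}[f^{(n)}]$, the Fubini-type adjoint relation $\int_a^b g\cdot{_aI_x^{n-\alpha}}[h]\,dx=\int_a^b h\cdot{_xI_b^{n-\alpha}}[g]\,dx$, and $n$-fold classical integration by parts combined with $(-1)^k\frac{d^k}{dx^k}{_xI_b^{n-\alpha}}[g]={_xD_b^{\alpha+k-n}}[g]$ --- are correct and appear in the right order; your explanation of where the factor $(-1)^{n+j}$ in the second identity comes from is also exactly right.

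The one place where the ``purely bookkeeping'' you defer is actually wrong as described is the identification of the boundary terms. Carrying out the $n$ integrations by parts gives $\sum_{j=0}^{n-1}\bigl[{_xD_b^{\alpha+j-n}}[g]\cdot f^{(n-1-j)}\bigr]_a^b$ with no further sign, and in the first identity there is no spare $(-1)^n$ available to ``absorb'' anything: the left Caputo derivative carries no such factor. So matching your result to the stated formula requires reading the integer-order symbol ${_xD_b^{n-1-j}}[f]$ as the plain derivative $f^{(n-1-j)}$; if one instead extends the paper's displayed definition of the right Riemann--Liouville derivative to integer orders, one gets ${_xD_b^{m}}[f]=(-1)^m f^{(m)}$ and every term with $n-1-j$ odd would change sign. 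You should state the convention explicitly and check it against the $0<\alpha<1$ case displayed after the theorem, where the single boundary term $[{_xI_b^{1-\alpha}}[g]\cdot f]_a^b$ confirms your computation. A second, more minor point: since $n-\alpha\in(0,1]$, already the first differentiation of ${_xI_b^{n-\alpha}}[g]$ falls outside the range $\beta>1$ in which you state the rule $\frac{d}{dx}{_xI_b^{\beta}}[g]=-{_xI_b^{\beta-1}}[g]$; for $k\ge1$ the object ${_xI_b^{n-\alpha-k}}[g]$ is a Riemann--Liouville derivative, and its existence and continuity (needed to evaluate the bracket at $x=a$ and $x=b$) must be justified from $g\in C^n$ rather than from the differentiation rule alone.
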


In the particular case when $0<\alpha<1$,
we get from Theorem~\ref{thm:frac:IP:C} that
\begin{equation*}
\int_{a}^{b}g(x)\cdot {_a^C D_x^\alpha}[f](x)dx
=\int_a^b f(x)\cdot {_x D_b^\alpha}[g](x)dx
+\left[{_xI_b^{1-\alpha}}[g](x) \cdot f(x)\right]_a^b
\end{equation*}
and
\begin{equation*}
\int_{a}^{b}g(x)\cdot {_x^C D_b^\alpha}[f](x)dx
=\int_a^b f(x)\cdot {_a D_x^\alpha}[g](x)dx
-\left[{_aI_x^{1-\alpha}}[g](x) \cdot f(x)\right]_a^b.
\end{equation*}
In addition, if $f$ is such that $f(a)=f(b)=0$, then
$$
\int_{a}^{b}  g(x)\cdot{_a^C D_x^\alpha}[f](x)dx
=\int_a^b f(x)\cdot {_x D_b^\alpha}[g](x)dx
$$
and
$$
\int_{a}^{b}  g(x)\cdot{_x^C D_b^\alpha}[f](x)dx
=\int_a^b f(x)\cdot {_a D_x^\alpha}[g](x)dx.
$$

Along the work, we denote by $\partial_iL$, $i=1,\ldots,m$
($m\in\mathbb{N}$), the partial derivative of function
$L:\mathbb{R}^m\rightarrow \mathbb{R}$
with respect to its $i$th argument.
For convenience of notation, we introduce the operator
$^C_\alpha[\cdot]_{\beta}$ defined by
$$
^C_\alpha[y]_{\beta}
:= x \mapsto \left(x,y(x),\, {_a^C D_x^\alpha}[y](x),\,{_x^C D_b^\beta}[y](x)\right),
$$
where $\alpha,\beta\in(0,1)$.


\section{Euler--Lagrange equations}
\label{sec:EulerLag}

The fundamental problem of the fractional calculus of variations
is addressed in the following way: find functions $y \in \mathcal{E}$,
$$
\mathcal{E}:=\left\{ y\in C^1([a,b]) \, | \, y(a)=y_a \mbox{ and } y(b)=y_b \right\},
$$
that maximize or minimize the functional
\begin{equation}
\label{funct1}
J(y)=\int_a^b \left(L\circ {^C_\alpha}[y]_{\beta}\right)(x)dx.
\end{equation}
As usual, the Lagrange function $L$ is assumed to be of class $C^1$
on all its arguments. We also assume that
$\partial_3 L \circ {^C_\alpha}[y]_{\beta}$ has continuous
right Riemann--Liouville fractional derivative of order $\alpha$
and $\partial_4 L \circ {^C_\alpha}[y]_{\beta}$
has continuous left Riemann--Liouville
fractional derivative of order $\beta$ for $y \in \mathcal{E}$.

In \cite{Agrawal} a necessary condition of optimality for such functionals is proved.
We remark that although functional \eqref{funct1} contains only Caputo fractional derivatives,
the fractional Euler--Lagrange equation also contains
Riemann--Liouville fractional derivatives.

\begin{theorem}[Euler--Lagrange equation for (\ref{funct1})]
\label{thm:3}
If $y$ is a minimizer or a maximizer of $J$ on $\mathcal{E}$, then $y$
is a solution of the fractional differential equation
\begin{equation}
\label{ELequation}
\left(\partial_2 L \circ {^C_\alpha}[y]_{\beta}\right)(x)
+{_xD_b^\alpha}\left[\partial_3 L \circ {^C_\alpha}[y]_{\beta}\right](x)
+{_aD_x^\beta}\left[\partial_4 L \circ {^C_\alpha}[y]_{\beta}\right](x)=0
\end{equation}
for all $x\in[a,b]$.
\end{theorem}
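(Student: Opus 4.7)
The plan is to carry out the standard first-variation argument, with classical integration by parts replaced by the fractional version provided by Theorem~\ref{thm:frac:IP:C}. Let $y$ be an extremizer of $J$ in $\mathcal{E}$, and pick an arbitrary variation $\eta \in C^1([a,b])$ with $\eta(a) = \eta(b) = 0$, so that $y + \epsilon \eta \in \mathcal{E}$ for every $\epsilon \in \mathbb{R}$. Set $j(\epsilon) := J(y + \epsilon \eta)$. By linearity of ${_a^CD_x^\alpha}$ and ${_x^CD_b^\beta}$, the Caputo derivatives of $y + \epsilon\eta$ split additively in $\epsilon$, so $j$ is a well-defined real function of $\epsilon$ with an interior extremum at $\epsilon = 0$; hence $j'(0) = 0$.

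The first step is to compute $j'(0)$ by differentiating under the integral sign and applying the chain rule to $L \circ {^C_\alpha}[y+\epsilon\eta]_{\beta}$. This yields
\begin{equation*}
\int_a^b \Bigl[\bigl(\partial_2 L \circ {^C_\alpha}[y]_{\beta}\bigr)\eta
+ \bigl(\partial_3 L \circ {^C_\alpha}[y]_{\beta}\bigr)\, {_a^CD_x^\alpha}[\eta]
+ \bigl(\partial_4 L \circ {^C_\alpha}[y]_{\beta}\bigr)\, {_x^CD_b^\beta}[\eta]\Bigr]\,dx = 0.
\end{equation*}

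The second step is to transfer the fractional derivatives off $\eta$. Since $\alpha,\beta \in (0,1)$, I apply the specialized form of Theorem~\ref{thm:frac:IP:C} to the last two terms, taking $f = \eta$ together with $g = \partial_3 L \circ {^C_\alpha}[y]_{\beta}$ in the first case and $g = \partial_4 L \circ {^C_\alpha}[y]_{\beta}$ in the second. The standing hypotheses on $L$ ensure that the right Riemann--Liouville derivative ${_xD_b^\alpha}[\partial_3 L \circ {^C_\alpha}[y]_{\beta}]$ and the left Riemann--Liouville derivative ${_aD_x^\beta}[\partial_4 L \circ {^C_\alpha}[y]_{\beta}]$ exist and are continuous, so the transformed integrands are well defined. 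The boundary contributions $\bigl[{_xI_b^{1-\alpha}}[\partial_3 L \circ {^C_\alpha}[y]_{\beta}]\cdot \eta\bigr]_a^b$ and $-\bigl[{_aI_x^{1-\beta}}[\partial_4 L \circ {^C_\alpha}[y]_{\beta}]\cdot \eta\bigr]_a^b$ vanish identically because $\eta(a) = \eta(b) = 0$.

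After this manipulation the stationarity condition becomes
\begin{equation*}
\int_a^b \Bigl[\partial_2 L \circ {^C_\alpha}[y]_{\beta}
+ {_xD_b^\alpha}\bigl[\partial_3 L \circ {^C_\alpha}[y]_{\beta}\bigr]
+ {_aD_x^\beta}\bigl[\partial_4 L \circ {^C_\alpha}[y]_{\beta}\bigr]\Bigr]\eta(x)\,dx = 0
\end{equation*}
for every admissible $\eta$. The final step invokes the fundamental lemma of the calculus of variations: the bracketed coefficient is continuous on $[a,b]$, and $\eta \in C^1([a,b])$ with vanishing endpoints is arbitrary, so the coefficient must vanish pointwise, which is precisely \eqref{ELequation}. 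The only delicate point is the careful bookkeeping of the boundary terms in the fractional integration by parts; once the hypotheses on $L$ are used to guarantee existence of the Riemann--Liouville derivatives, the remainder of the argument is the classical recipe transported to the Caputo setting.
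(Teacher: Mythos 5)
Your proposal is correct and follows essentially the same route as the paper: a first variation $j(\epsilon)=J(y+\epsilon\eta)$ with $\eta$ vanishing at the endpoints, the fractional integration by parts formula of Theorem~\ref{thm:frac:IP:C} in its $0<\alpha<1$ form to move the Caputo derivatives off $\eta$ (with the boundary terms killed by $\eta(a)=\eta(b)=0$), and the classical fundamental lemma of the calculus of variations. The paper compresses the integration by parts and boundary-term bookkeeping into one sentence; you simply make those steps explicit.
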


\begin{proof}
Given $|\epsilon| \ll 1$, consider $h \in V$ where
$$
V := \left\{ h\in C^1([a,b]) \, | \, h(a)=0 \mbox{ and } h(b)=0 \right\},
$$
and a variation of function $y$ of type $y+\epsilon h$.
Define the real valued function $j(\epsilon)$ by
\begin{equation*}
j(\epsilon)=J(y+\epsilon h)
=\int_a^b \left(L \circ {^C_\alpha}[y+\epsilon h]_{\beta}\right)(x) dx.
\end{equation*}
Since $\epsilon=0$ is a minimizer or a maximizer of $j$, we have $j'(0)=0$. Thus,
\begin{multline*}
\int_a^b \Bigl[
\left(\partial_2 L \circ {^C_\alpha}[y]_{\beta}\right)(x) \cdot h(x)
+ \left(\partial_3 L \circ {^C_\alpha}[y]_{\beta}\right)(x)
\cdot {_a^CD_x^\alpha}[h](x)\\
+ \left(\partial_4 L \circ {^C_\alpha}[y]_{\beta}\right)(x)
\cdot {_x^CD_b^\beta}[h](x) \Bigr]dx=0.
\end{multline*}
We obtain equality \eqref{ELequation}
integrating by parts and applying the
classical fundamental lemma of the calculus of variations \cite{vanBrunt}.
\end{proof}

We remark that when $\alpha \rightarrow 1$, then \eqref{funct1}
is reduced to a classical functional
$$
J(y)=\int_a^b f(x,y(x),y'(x))\,dx,
$$
and the fractional Euler--Lagrange equation \eqref{ELequation}
gives the standard one:
$$
\partial_2 f(x,y(x),y'(x))-\frac{d}{dx} \partial_3 f(x,y(x),y'(x))=0.
$$
Solutions to equation \eqref{ELequation}
are said to be \emph{extremals} of \eqref{funct1}.


\section{The isoperimetric problem}
\label{sec:Iso}

The fractional isoperimetric problem is stated in the following way:
find the minimizers or maximizers of functional $J$ as in \eqref{funct1},
over all functions $y \in \mathcal{E}$
satisfying the fractional integral constraint
$$
I(y)=\int_a^b \left(g \circ {^C_\alpha}[y]_{\beta}\right)(x) dx = l.
$$
Similarly as $L$, $g$ is assumed to be of class $C^1$ with respect
to all its arguments, function $\partial_3 g \circ {^C_\alpha}[y]_{\beta}$
is assumed to have continuous
right Riemann--Liouville fractional derivative of order $\alpha$
and $\partial_4 g \circ {^C_\alpha}[y]_{\beta}$
continuous left Riemann--Liouville
fractional derivative of order $\beta$ for $y \in \mathcal{E}$.
A necessary optimality condition for the fractional
isoperimetric problem is given in \cite{Almeida3}.

\begin{theorem}
\label{thm:iso:n}
Let $y$ be a minimizer or maximizer of $J$ on $\mathcal{E}$,
when restricted to the set of functions $z \in \mathcal{E}$ such that
$I(z)=l$. In addition, assume that $y$ is not an extremal of $I$.
Then, there exists a constant $\lambda$ such that $y$ is a solution of
\begin{equation}
\label{eq:EL:iso}
\left(\partial_2 F \circ {^C_\alpha}[y]_{\beta}\right)(x)
+{_xD_b^\alpha}[\partial_3 F \circ {^C_\alpha}[y]_{\beta}](x)
+ {_aD_x^\beta}[\partial_4 F \circ {^C_\alpha}[y]_{\beta}](x)=0
\end{equation}
for all $x \in [a,b]$, where $F=L+\lambda g$.
\end{theorem}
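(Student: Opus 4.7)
The plan is to adapt the classical Lagrange multiplier argument to the fractional setting, using a two-parameter family of admissible variations together with the implicit function theorem, and relying on the fractional integration by parts formula (Theorem~\ref{thm:frac:IP:C}) in place of the classical one.

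First, I would fix arbitrary $h_1, h_2 \in V$ (with $V$ as in the proof of Theorem~\ref{thm:3}) and consider the two-parameter variation $y + \epsilon_1 h_1 + \epsilon_2 h_2$ for $|\epsilon_1|, |\epsilon_2| \ll 1$. Define the $C^1$ functions
\[
\widehat{J}(\epsilon_1,\epsilon_2) := J(y + \epsilon_1 h_1 + \epsilon_2 h_2), \qquad
\widehat{I}(\epsilon_1,\epsilon_2) := I(y + \epsilon_1 h_1 + \epsilon_2 h_2) - l.
\]
Differentiating $\widehat{I}$ under the integral sign with respect to $\epsilon_2$ at $(0,0)$, and then transferring the Caputo operators off $h_2$ using Theorem~\ref{thm:frac:IP:C} (the boundary terms vanish because $h_2(a)=h_2(b)=0$), one obtains
\[
\frac{\partial \widehat{I}}{\partial \epsilon_2}(0,0)
= \int_a^b h_2(x)\cdot E_g(x)\,dx,
\]
where $E_g(x)$ denotes the left-hand side of the Euler--Lagrange expression in \eqref{ELequation} with $L$ replaced by $g$.

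Since by hypothesis $y$ is not an extremal of $I$, the function $E_g$ does not vanish identically, so by continuity there exists some $h_2 \in V$ for which the above integral is nonzero. Fixing such an $h_2$, the implicit function theorem yields a $C^1$ map $\epsilon_1 \mapsto \epsilon_2(\epsilon_1)$ defined near $0$, with $\epsilon_2(0)=0$, such that $\widehat{I}(\epsilon_1,\epsilon_2(\epsilon_1)) \equiv 0$. The corresponding one-parameter family $y + \epsilon_1 h_1 + \epsilon_2(\epsilon_1)h_2$ is therefore admissible for the isoperimetric problem, and $\epsilon_1 = 0$ is a local extremum of $\epsilon_1 \mapsto \widehat{J}(\epsilon_1,\epsilon_2(\epsilon_1))$. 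The standard finite-dimensional Lagrange multiplier rule in $\mathbb{R}^2$ then furnishes a constant $\lambda$, depending only on $y$ and on the fixed $h_2$ (not on $h_1$), such that $\nabla (\widehat{J} + \lambda \widehat{I})(0,0) = 0$.

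Finally, setting the $\epsilon_1$-partial derivative of $\widehat{J} + \lambda\widehat{I}$ to zero at the origin, differentiating under the integral sign, and integrating by parts via Theorem~\ref{thm:frac:IP:C}, I obtain
\[
\int_a^b h_1(x)\cdot\Bigl[
\left(\partial_2 F \circ {^C_\alpha}[y]_{\beta}\right)(x)
+{_xD_b^\alpha}[\partial_3 F \circ {^C_\alpha}[y]_{\beta}](x)
+{_aD_x^\beta}[\partial_4 F \circ {^C_\alpha}[y]_{\beta}](x)
\Bigr]dx=0,
\]
with $F = L + \lambda g$. Because $h_1 \in V$ is arbitrary, the classical fundamental lemma of the calculus of variations delivers \eqref{eq:EL:iso}.

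The main obstacle I anticipate is not the calculation itself, which is essentially bookkeeping with Theorem~\ref{thm:frac:IP:C}, but rather the clean justification of the Lagrange multiplier step: one must argue that the $h_2$ chosen to enable the implicit function theorem can be fixed independently of $h_1$, and that the resulting multiplier $\lambda$ does not depend on $h_1$ either, so that the same $\lambda$ serves every admissible variation. The hypothesis that $y$ is not an extremal of $I$ is exactly what makes this nondegeneracy possible, and it plays the role that the constraint qualification plays in finite-dimensional constrained optimization.
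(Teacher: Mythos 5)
Your proposal is correct and follows essentially the same route as the paper's own (more compressed) proof: a two-parameter variation, the implicit function theorem enabled by the non-extremality of $I$, the Lagrange multiplier rule in $\mathbb{R}^2$, and then integration by parts plus the fundamental lemma. Your added care about why $\lambda$ is independent of $h_1$ (it is determined by the $\epsilon_2$-partials, which involve only the fixed $h_2$) is exactly the point the paper leaves implicit.
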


\begin{proof}
Given $h_1,h_2 \in V$, $|\epsilon_1|\ll1$ and $|\epsilon_2|\ll1$, consider
$$
j(\epsilon_1,\epsilon_2)
=\int_a^b \left(L \circ {^C_\alpha}[y+\epsilon_1h_1+\epsilon_2h_2]_{\beta}\right)(x)dx
$$
and
$$
i(\epsilon_1,\epsilon_2)
=\int_a^b \left(g \circ {^C_\alpha}[y+\epsilon_1h_1+\epsilon_2h_2]_{\beta}\right)(x)dx-l.
$$
Since $y$ is not an extremal for $I$, there exists a function $h_2$ such that
$$\left.\frac{\partial i}{\partial \epsilon_2} \right|_{(0,0)}\neq 0,$$
and by the implicit function theorem, there exists
a $C^1$ function $\epsilon_2(\cdot)$,
defined in some neighborhood of zero, such that
$$
i(\epsilon_1,\epsilon_2(\epsilon_1))=0.
$$
Applying the Lagrange multiplier rule
(see, \textrm{e.g.}, \cite[Theorem~4.1.1]{vanBrunt})
there exists a constant $\lambda$ such that
$$
\nabla(j(0,0)+\lambda i(0,0))=\textbf{0}.
$$
Differentiating $j$ and $i$ at $(0,0)$,
and integrating by parts, we prove the theorem.
\end{proof}

\begin{example}
Let $\overline y(x)={E_{\alpha}}(x^\alpha)$, $x \in [0,1]$,
where ${E_\alpha}$ is the Mittag--Leffler function.
Then ${^C_0 D_x^\alpha}[\overline{y}]=\overline{y}$.
Consider the following fractional variational problem:
\begin{equation*}
\begin{gathered}
J(y)=\int_0^1 \left({^C_0 D_x^\alpha}[y](x)\right)^2 \, dx
\longrightarrow \textrm{extr},\\
I(y)=\int_0^1 \overline{y}(x) \, {^C_0 D_x^\alpha}[y](x) \, dx  = l,\\
y(0)=1\, , \quad y(1)= y_1,
\end{gathered}
\end{equation*}
with $l := \displaystyle \int_0^1 (\overline{y}(x))^2 dx$
and $y_1 := {E_\alpha}(1)$. In this case function
$F$ of Theorem~\ref{thm:iso:n} is
$$
F(x,y,v,w) =v^2+\lambda \overline{y}(x) \, v
$$
and the fractional Euler--Lagrange equation \eqref{eq:EL:iso} is
$$
{_xD_1^\alpha}[2\,{^C_0D_x^\alpha}[y]+\lambda \overline{y}](x)=0.
$$
A solution to this problem is $\lambda=-2$
and $y(x)=\overline{y}(x)$, $x \in [0,1]$.
\end{example}

The case when $y$ is an extremal of $I$
is also included in the results of \cite{Almeida3}.

\begin{theorem}
If $y$ is a minimizer or a maximizer of $J$ on $\mathcal{E}$,
subject to the isoperimetric constraint $I(y)=l$,
then there exist two constants $\lambda_0$
and $\lambda$, not both zero, such that
$$
\left(\partial_2 K \circ {^C_\alpha}[y]_{\beta}\right)(x)
+{_xD_b^\alpha}\left[\partial_3 K \circ {^C_\alpha}[y]_{\beta}\right](x)
+{_aD_x^\beta}\left[\partial_4 K \circ {^C_\alpha}[y]_{\beta}\right](x)=0
$$
for all $x \in [a,b]$, where $K=\lambda_0 L+\lambda g$.
\end{theorem}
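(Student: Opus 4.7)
The plan is to reduce to Theorem~\ref{thm:iso:n} by splitting into two cases according to whether $y$ is or is not an extremal of the isoperimetric functional $I$. In both cases I will produce the pair $(\lambda_0,\lambda)$ explicitly and verify that it is nontrivial.

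\textbf{Normal case: $y$ is not an extremal of $I$.} Here Theorem~\ref{thm:iso:n} applies directly and yields a constant $\lambda \in \mathbb{R}$ such that the fractional Euler--Lagrange equation holds for $F = L + \lambda g$. Setting $\lambda_0 = 1$ we have $K = \lambda_0 L + \lambda g = F$, so the stated equation holds, and $(\lambda_0,\lambda) = (1,\lambda) \neq (0,0)$.

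\textbf{Abnormal case: $y$ is an extremal of $I$.} In this situation the candidate $y$ already satisfies, by the definition of extremal applied to the Lagrangian $g$,
$$
(\partial_2 g \circ {^C_\alpha}[y]_{\beta})(x)
+{_xD_b^\alpha}\bigl[\partial_3 g \circ {^C_\alpha}[y]_{\beta}\bigr](x)
+{_aD_x^\beta}\bigl[\partial_4 g \circ {^C_\alpha}[y]_{\beta}\bigr](x) = 0
$$
for all $x \in [a,b]$. Choosing $\lambda_0 = 0$ and $\lambda = 1$ gives $K = g$, so the displayed equation of the theorem coincides with the above identity and is therefore satisfied. The pair $(\lambda_0,\lambda) = (0,1)$ is nonzero, as required.

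The two cases are mutually exhaustive, so the theorem follows. The argument is essentially a bookkeeping extension of Theorem~\ref{thm:iso:n} that accommodates the abnormal multiplier; there is no substantive analytic obstacle beyond what was already handled in the proof of Theorem~\ref{thm:iso:n} (namely the implicit function theorem plus fractional integration by parts from Theorem~\ref{thm:frac:IP:C}). The only point that requires a small check is that, in the abnormal case, the assumed meaning of \emph{extremal of $I$} is exactly the Euler--Lagrange equation associated with the Lagrangian $g$, so that taking $K = g$ produces the desired identity without any further variational computation.
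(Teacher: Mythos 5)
Your proof is correct, but it takes a genuinely different route from the paper's. The paper proves this theorem by re-running the variational argument of Theorem~\ref{thm:iso:n} from scratch, replacing the ordinary Lagrange multiplier rule with the abnormal multiplier rule (Theorem~4.1.3 in van Brunt's book), which produces the pair $(\lambda_0,\lambda)$ in one uniform stroke without any case distinction. You instead split into cases: in the normal case you simply cite Theorem~\ref{thm:iso:n} and set $\lambda_0=1$; in the abnormal case you observe that, since the paper defines an extremal as a solution of the Euler--Lagrange equation, the hypothesis ``$y$ is an extremal of $I$'' \emph{is} the conclusion of the theorem with $(\lambda_0,\lambda)=(0,1)$, so nothing remains to prove. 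Both arguments are valid. Your version is more elementary (no appeal to the abnormal multiplier rule) and has the pedagogical merit of exposing why the abnormal case is degenerate: with $\lambda_0=0$ the conclusion carries no information about $L$, and indeed your argument never uses the optimality of $y$ there. The paper's version is shorter to state and treats both cases on the same footing, at the cost of invoking a stronger external theorem. Your closing caveat --- that ``extremal of $I$'' must mean precisely ``solution of the Euler--Lagrange equation with Lagrangian $g$'' --- is the right thing to check, and it is settled by the paper's explicit definition of extremal at the end of Section~\ref{sec:EulerLag} together with the standing regularity assumptions on $\partial_3 g$ and $\partial_4 g$ made at the start of Section~\ref{sec:Iso}.
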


\begin{proof}
The same as the proof of Theorem~\ref{thm:iso:n},
but now using the abnormal Lagrange multiplier rule
(see, \textrm{e.g.}, \cite[Theorem~4.1.3]{vanBrunt}).
\end{proof}


\section{Transversality conditions}
\label{trans}

We now give the \emph{natural boundary conditions}
(also known as \emph{transversality conditions})
for problems with the terminal point
of integration free as well as $y_b$.

Let
$$
\mathcal{F}:=\left\{ (y,x)\in C^1([a,b])\times [a,b] \, | \, y(a)=y_a \right\}.
$$
The type of functional we consider now is
\begin{equation}
\label{eq:JT}
J(y,T)=\int_a^T \left(L \circ {^C_\alpha}[y]\right)(x)\,dx,
\end{equation}
where the operator $^C_\alpha[\cdot]$ is defined by
$$
^C_\alpha[y]:= x \mapsto \left(x,y(x),\, {_a^C D_x^\alpha}[y](x)\right).
$$
These problems are investigated in \cite{Agrawal}
and more general cases in \cite{Almeida2}.

\begin{theorem}
Suppose that $(y,T) \in \mathcal{F}$
minimizes or maximizes $J$ defined
by \eqref{eq:JT} on $\mathcal{F}$. Then
\begin{equation}
\label{eq:EL:T}
\left(\partial_2 L \circ {^C_\alpha}[y]\right)(x)
+{_x D_T^\alpha}\left[\partial_3 L \circ {^C_\alpha}[y]\right](x)=0
\end{equation}
for all $ x \in [a,T]$. Moreover, the following transversality conditions hold:
$$
\left(L \circ {^C_\alpha}[y]\right)(T) =0\, ,
\quad
{_x I_T^{1-\alpha}}\left[\partial_3 L \circ {^C_\alpha}[y] \right](T)=0.
$$
\end{theorem}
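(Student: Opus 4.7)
The plan is to mimic the proof of Theorem~\ref{thm:3} but with two-parameter admissible variations accounting for the freedom in the terminal point. Fix $h\in C^1([a,b])$ with $h(a)=0$ (but $h(T)$ now unrestricted) and $\Delta T\in\mathbb{R}$, and consider the variation $(y+\epsilon h, T+\epsilon\Delta T)$, which lies in $\mathcal{F}$ for $|\epsilon|\ll 1$. Define
\begin{equation*}
j(\epsilon):=J(y+\epsilon h,T+\epsilon\Delta T)
=\int_a^{T+\epsilon\Delta T}\left(L\circ{^C_\alpha}[y+\epsilon h]\right)(x)\,dx.
\end{equation*}
Optimality forces $j'(0)=0$. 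Differentiating under the integral sign and using the Leibniz rule for the variable upper limit produces three contributions:
\begin{equation*}
j'(0)=\left(L\circ{^C_\alpha}[y]\right)(T)\,\Delta T
+\int_a^T\!\!\Bigl[\left(\partial_2 L\circ{^C_\alpha}[y]\right)(x)\,h(x)
+\left(\partial_3 L\circ{^C_\alpha}[y]\right)(x)\,{_a^C D_x^\alpha}[h](x)\Bigr]dx.
\end{equation*}

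Next I would apply Theorem~\ref{thm:frac:IP:C} (in the $0<\alpha<1$ form) on the interval $[a,T]$ to the last term, which yields
\begin{equation*}
\int_a^T\left(\partial_3 L\circ{^C_\alpha}[y]\right)(x)\,{_a^C D_x^\alpha}[h](x)\,dx
=\int_a^T h(x)\,{_xD_T^\alpha}\!\left[\partial_3 L\circ{^C_\alpha}[y]\right](x)\,dx
+\Bigl[{_xI_T^{1-\alpha}}\!\left[\partial_3 L\circ{^C_\alpha}[y]\right](x)\,h(x)\Bigr]_a^T.
\end{equation*}
Because $h(a)=0$, the lower boundary term drops out. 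Substituting back,
\begin{equation*}
0=\int_a^T h(x)\Bigl[\left(\partial_2 L\circ{^C_\alpha}[y]\right)(x)
+{_xD_T^\alpha}\!\left[\partial_3 L\circ{^C_\alpha}[y]\right](x)\Bigr]dx
+{_xI_T^{1-\alpha}}\!\left[\partial_3 L\circ{^C_\alpha}[y]\right](T)\,h(T)
+\left(L\circ{^C_\alpha}[y]\right)(T)\,\Delta T.
\end{equation*}

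The conclusion now follows by the usual three-step localization. First, restrict to $h$ with $h(T)=0$ and to $\Delta T=0$; the fundamental lemma of the calculus of variations (arbitrariness of $h$ on the open interval) yields the fractional Euler--Lagrange equation \eqref{eq:EL:T}. Second, with \eqref{eq:EL:T} in force, the integral term is identically zero, and one is left with ${_xI_T^{1-\alpha}}[\partial_3 L\circ{^C_\alpha}[y]](T)\,h(T)+(L\circ{^C_\alpha}[y])(T)\,\Delta T=0$; choosing $\Delta T=0$ and $h(T)\neq 0$ gives the second transversality condition, and choosing $h(T)=0$ and $\Delta T\neq 0$ gives the first.

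The main obstacle I anticipate is purely a bookkeeping one: correctly executing the Leibniz rule for the moving endpoint simultaneously with the integration-by-parts identity of Theorem~\ref{thm:frac:IP:C}. In particular one must notice that, because the integration is taken up to $T$ rather than $b$, the right Riemann--Liouville derivative and fractional integral that arise have upper limit $T$ (hence ${_xD_T^\alpha}$ and ${_xI_T^{1-\alpha}}$), and that $h(T)$ and $\Delta T$ are truly independent scalars so that their coefficients must vanish separately.
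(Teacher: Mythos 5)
Your proposal is correct and follows precisely the strategy the paper's own (one-sentence) proof sketches: simultaneous variations $y+\epsilon h$ and $T+\epsilon \Delta T$, Fermat's theorem, Leibniz's rule for the moving upper limit, the integration-by-parts formula of Theorem~\ref{thm:frac:IP:C} applied on $[a,T]$ in the $0<\alpha<1$ form, and the independence of $h(T)$ and $\Delta T$ to split off the two transversality conditions after the Euler--Lagrange equation is established via the fundamental lemma. Your write-up simply supplies the computational details that the paper omits, and it does so correctly.
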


\begin{proof}
The result is obtained by considering variations $y+\epsilon h$
of function $y$ and variations $T+\epsilon \triangle T$
of $T$ as well, and then applying the Fermat theorem,
integration by parts, Leibniz's rule, and using the arbitrariness of
$h$ and $\triangle T$.
\end{proof}

Transversality conditions for several other situations can
be easily obtained. Some important examples are:

\begin{itemize}
\item If $T$ is fixed but $y(T)$ is free,
then besides the Euler--Lagrange equation \eqref{eq:EL:T}
one obtains the transversality condition
$$
{_x I_T^{1-\alpha}}\left[\partial_3 L \circ {^C_\alpha}[y]\right](T)=0.
$$

\item If $y(T)$ is given but $T$ is free, then the transversality condition is
$$
\left(L \circ {^C_\alpha}[y]\right)(T)
-y'(T) \cdot {_x I_T^{1-\alpha}}\left[\partial_3 L
\circ {^C_\alpha}[y]\right](T)=0.
$$

\item If $y(T)$ is not given but is restricted
to take values on a certain given curve $\psi$,
\textrm{i.e.}, $y(T)=\psi(T)$, then
$$
\left(\psi'(T) - y'(T) \right) \cdot
{_x I_T^{1-\alpha}}\left[\partial_3 L \circ {^C_\alpha}[y]\right](T)
+ \left(L \circ {^C_\alpha}[y]\right)(T)=0.
$$
\end{itemize}


\begin{acknowledgement}
Work supported by {\it FEDER} funds through
{\it COMPETE} --- Operational Programme Factors of Competitiveness
(``Programa Operacional Factores de Competitividade'')
and by Portuguese funds through the
{\it Center for Research and Development
in Mathematics and Applications} (University of Aveiro)
and the Portuguese Foundation for Science and Technology
(``FCT --- Funda\c{c}\~{a}o para a Ci\^{e}ncia e a Tecnologia''),
within project PEst-C/MAT/UI4106/2011
with COMPETE number FCOMP-01-0124-FEDER-022690.
Agnieszka Malinowska is also supported by Bia{\l}ystok
University of Technology grant S/WI/2/2011.
\end{acknowledgement}



\end{document}